\newtheorem{theo}{Theorem}[section]
\newtheorem{example}[theo]{Example}
\newtheorem{definition}[theo]{Definition}
\newtheorem{lemma}[theo]{Lemma}
\newtheorem{theorem}[theo]{Theorem}
\newtheorem{corollary}[theo]{Corollary}
\newtheorem{proposition}[theo]{Proposition}
\renewcommand{\=}{:=}
\newcommand{\beq}{\begin{equation}}
\newcommand{\eeq}{\end{equation}}
\newcommand{\C}{\mathbb{C}}
\renewcommand{\P}{\mathbb{P}}
\renewcommand{\H}{\mathbb{H}}
\newcommand{\HH}{\mathbb{H}}
\newcommand{\HP}{\mathbb{HP}}
\newcommand{\hh}{{\mathbb{H}}}
\newcommand{\cc}{{\mathbb{C}}}
\newcommand{\rr}{{\mathbb{R}}}
\newcommand{\nn}{{\mathbb{N}}}
\newcommand{\sss}{{\mathbb{S}}}
\newcommand{{\ee}}{{\`e}}
\newcommand{{\aA }}{{\`a}}
\newcommand{{\oo}}{{\`o}}
\newcommand{{\uu}}{{\`u}}
\newcommand{{\ii}}{{\`i}}
\newcommand{{\BL}}{{Bl_0(\HH^n)}}
\newcommand{{\BLHP}}{{Bl_{[1,0,0]}(\HP^2)}}
\newcommand{\bl}{{Bl_0(B^n)}}
\renewcommand{\square}{\kern1pt\vbox
{\hrule height 0.6pt\hbox{\vrule width 0.6pt\hskip 3pt
\vbox{\vskip 6pt}\hskip 3pt\vrule width 0.6pt}\hrule height0.6pt}\kern1pt}
\def\<#1,#2>{\langle\,#1,\,#2\,\rangle}
\newcommand{\arr}{\begin{array}{rlll}}
\newcommand{\ea}{\end{array}}
\newcommand{\bea}{\begin{eqnarray}}
\newcommand{\eea}{\end{eqnarray}}
\newcommand{\bean}{\begin{eqnarray*}}
\newcommand{\eean}{\end{eqnarray*}}
\def\sideremark#1{\ifvmode\leavevmode\fi\vadjust{
\vbox to0pt{\hbox to 0pt{\hskip\hsize\hskip1em
\vbox{\hsize3cm\tiny\raggedright\pretolerance10000
\noindent #1\hfill}\hss}\vbox to8pt{\vfil}\vss}}}
\newcounter{ssig}
\newcounter{ttig}
\begin{document}
\author{Graziano Gentili}
\address{Dipartimento di Matematica e Informatica ``U. Dini'', Universit\`a di Firenze, 50134 Firenze, Italy}
\email{gentili@math.unifi.it}



\author{Anna Gori}
\address{Dipartimento di Matematica, Universit\`a di Milano, Via Saldini 50, 20133 Milano, Italy}
\email{anna.gori@unimi.it}

\author{Giulia Sarfatti}
\address{Istituto Nazionale di Alta Matematica ``F. Severi'', Citt\`a Universitaria, Piazzale Aldo Moro 5, 00185 Roma, Italy \and Institut de Math\'ematiques de Jussieu,
Universit\'e Pierre et Marie Curie, 4, place Jussieu, F-75252 Paris, France}
\email{sarfatti@math.unifi.it}
\thanks{\rm This project has been supported by G.N.S.A.G.A. of INdAM - Rome (Italy), by MIUR of the Italian Government (Research Projects: PRIN ``Real and complex manifolds: geometry, topology and harmonic analysis'' and FIRB ``Geometric function theory and differential geometry").}

\title{A direct approach to quaternionic manifolds}

\begin{abstract}{The recent definition of slice regular function of several quaternionic variables suggests a new notion of  quaternionic manifold. We give the definition of quaternionic regular manifold, as a space locally modeled on $\H^n$, in a slice regular sense. We exhibit some significant classes of examples, including manifolds which carry a quaternionic affine structure.\\
\\
{\bf Mathematics Subject Classification (2010):} 30G35, 53C15}\\
{\bf keywords:} Regular functions of  quaternionic variables, quaternionic manifolds
\end{abstract}
\maketitle

\section{Introduction}
The definition of {\em slice regular}  function of one quaternionic variable,  given in \cite{GS},  has originated  a rich area of  research that is producing a good quaternionic counterpart of the theory of holomorphic functions. In this theory - whose status was recently presented in \cite{libroGSS} -  many results hold in analogy with the complex case, while  several properties show how deep the differences between the complex and the quaternionic setting can be. Slice regular functions play a fundamental role in surprising applications: one of them appears in differential geometry, where slice regular functions can be used for the classification of Orthogonal Complex Structures in certain subdomains of the space of quaternions, \cite{GenSalSto}. 

Let $\H$ denote the skew field of quaternions and let $\sss\subset \H$ denote the $2$-dimensional sphere of purely imaginary quaternions, $\sss=\{q\in \H : q^2=-1\}$. Then $\H$ is decomposed in ``slices" as follows 
\[
\H =\bigcup_{I\in \sss} \rr+\rr I
\]
where each $L_I=\rr+\rr I$ is isomorphic to the complex plane $\cc$. Slice regularity in one variable is then defined as follows (see \cite{libroGSS}).
\begin{definition}
Let $\Omega$ be a domain in $\hh$ and let $f : \Omega \to \hh$ be a function. For all $I \in \sss$, let us denote $L_I = \rr + \rr I$, $\Omega_I = \Omega \cap L_I$ and $f_I = f_{|_{\Omega_I}}$. 
The function $f$ is called \emph{(slice) regular} if, for all $I \in \sss$, the restriction $f_I$ is holomorphic, i.e. the function $\bar \partial_I f : \Omega_I \to \hh$ defined by
$$
\bar \partial_I f  = \frac{1}{2} \left( \frac{\partial}{\partial x}+I\frac{\partial}{\partial y} \right) f_I 
$$
vanishes identically on $\Omega_I$.
\end{definition}
\noindent The class of slice regular functions shares with the one-Century-old family of Fueter regular functions (see \cite{fueter}) the problem of not being closed under composition, but it has many nice properties: probably the most important is the fact of including the natural polynomials and power series of a quaternionic variable, \cite{sferiche}.

The interest and the richness of the theory of holomorphic functions of several complex variables immediately encourages the natural search for a quaternionic analog. Such a search is also urged by the desire to define and study quaternionic manifolds. It turns out that, mainly due to the lack of commutativity, slice regular functions of several quaternionic variables are not easy to define. After different attempts, a recent and satisfactory approach  is based on  the use of
{\em stem functions} (see Definition \ref{stemfunction}) introduced in \cite{fueter} by Fueter himself. Indeed Fueter, after he gave his  definition of quaternionic regularity in 1934, \cite{fueter},  envisaged a possible different approach. It  was only very recently, with  the paper by Ghiloni and Perotti \cite{GhiloniPerotti}, that this approach was fully understood and used to define slice regular functions of several quaternionic variables (see Definition \ref{regularseveral}). It is worthwhile noticing that, in one quaternionic variable, the definition of slice regularity made with the use of stem functions is  a generalization of the direct definition that we have presented above (see \cite{GP}).

How to give a ``direct'' definition of quaternionic manifold is an old, well motivated, problem addressed by many authors. Indeed, several different definitions  of quaternionic manifolds, as spaces locally modeled on $\H^n$, have been given, and have produced interesting theories. We like to mention here, in particular, the results by Kulkarni, \cite{kulkarni}, and Sommese, \cite{So}. Needless to say, the recently defined class of slice regular functions of several quaternionic variables encourges a further attempt to give a definition of quaternionic manifold. It is well known that the main difficulty that one encounters in doing such an attempt is the fact that in general  slice regularity is not preserved by composition. For instance, in the one variable case, 
the only classes of slice regular functions preserved by compositions are the class of affine functions $q\mapsto qa+b$ with $a,b\in \H$,  and the so called  {\em slice preserving} functions, i.e. those slice regular functions $f:\Omega\to \H$ such that $f(\Omega_I)\subseteq \Omega_I$, for all $I\in \sss$. 
As a consequence,  natural examples of quaternionic regular manifolds in dimension one are:  the quaternionic tori studied in \cite{Bis-gen} and the Hopf quaternionic manifolds considered in \cite{So}, whose transition functions are quaternionic affine;  the classical quaternionic projective space $\H\P^1$, whose transition functions are slice preserving.

The aim of this paper is to give a definition of (slice) quaternionic regular manifold in higher dimension and provide some significant classes of examples. 
 \begin{definition}
A differentiable manifold $M$ of $4n$ real dimensions is a \emph{quaternionic regular manifold} if it admits a differentiable atlas $\{(U_i, \varphi_i )\}_i$, $\varphi_i:U_i\to \HH^n$, whose transition functions 
\[\varphi_{ij}:=\varphi_j\circ \varphi_i^{-1}:\HH^n \supseteq \varphi_i(U_i\cap U_j)\to \varphi_j(U_i\cap U_j)\subseteq \HH^n,  \]
\[\varphi_{ij}:(q_1,\ldots , q_n) \mapsto (\varphi^1_{ij}(q_1, \ldots, q_n), \ldots, \varphi^n_{ij}(q_1, \ldots, q_n) )\]
are such that each component $\varphi^k_{ij}: \varphi_i(U_i\cap U_j) \to \HH$ is a slice regular function of $n$ quaternionic variables.     
\end{definition}

\noindent It turns out that Sommese's quaternionic manifolds, \cite{So}, including the   quaternionic Iwasawa manifold, are quaternionic regular.

After presenting some preliminary results, we  begin by proving, providing explicit  coordinates,  that, for all $n\in \nn$, the quaternionic projective space $\H\P^n$ and  the blow-up $Bl_0(\H^n)$ of $\H^n$ at $0$ are quaternionic regular manifolds.

We then identify an important class of quaternionic regular manifolds that, following what Kobayashi did in the complex case, \cite{Kobayashi},  we call \emph{affine quaternionic manifolds}, or \emph{manifolds with an affine quaternionic structure}.  Examples of these  manifolds are constructed as quotients of $\H^n$ with respect to the action of  subgroups of affine transformatons of $\H^n$ that act freely and properly discontinuously. An exhaustive study of this class of manifolds will be the object of a forthcoming paper. Paper \cite{alberto} by D\'iaz,Verjovsky and Vlacci is strictly related to this subject.

Then we pass to prove a few surgery-type results, concerning the connected sum of a quaternionic regular manifold $M$ with $\HP^n$, and the blow-up of $M$ at a point $p\in M$. Namely, we prove the following result. 
\begin{theorem}
\label{sum1}If $M$ is a regular quaternionic manifold of quaternionic dimension $n$, then the connected sum of $M$ and $\HP^n$ is quaternionic diffeomorphic to the blow-up of $M$ at a point $p\in M$. Moreover, both manifolds are quaternionic regular.
\end{theorem}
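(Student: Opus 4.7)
The plan is first to equip $Bl_p(M)$ with a quaternionic regular atlas, and then to exhibit an explicit quaternionic diffeomorphism between $Bl_p(M)$ and $M\#\HP^n$; through this diffeomorphism the connected sum inherits a quaternionic regular structure. Fix a chart $(U,\varphi)$ of $M$ containing $p$ with $\varphi(p)=0$, and let $\pi\colon Bl_0(\H^n)\to\H^n$ be the blow-down map, whose explicit regular atlas was constructed earlier in the paper. I would define $Bl_p(M)$ as the manifold obtained by gluing $M\setminus\{p\}$ and $Bl_0(\H^n)$ along the identification $U\setminus\{p\}\ni q\leftrightarrow \pi^{-1}(\varphi(q))\in Bl_0(\H^n)\setminus E$, where $E\cong\HP^{n-1}$ is the exceptional divisor.

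\smallskip\noindent\textbf{Regularity of the blow-up atlas.} The atlas on $Bl_p(M)$ consists of the original charts of $M$ restricted away from $p$, together with the explicit charts of $Bl_0(\H^n)$. Transitions within each of these two families are slice regular by hypothesis and by the earlier construction, respectively. The mixed transitions factor as $\psi\circ\varphi^{-1}\circ\pi\circ\chi^{-1}$, where $\psi$ is an old chart of $M$ and $\chi$ is one of the blow-up charts; since $\chi\circ\pi^{-1}$ on $\H^n\setminus\{0\}$ has slice regular components in the blow-up coordinates, and $\psi\circ\varphi^{-1}$ is slice regular by the regularity of $M$, the composition is slice regular on its domain.

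\smallskip\noindent\textbf{Identification with $M\#\HP^n$.} I would work locally around $p$. Pick a small open ball $B\subset U$ centered at $p$; then both $Bl_p(M)$ and $M\#\HP^n$ are obtained from $M\setminus B$ by gluing in, respectively, $\pi^{-1}(\overline{B})$ and $\HP^n\setminus B'$ for a small ball $B'\subset\HP^n$. It therefore suffices to construct a quaternionic diffeomorphism $\Phi\colon\pi^{-1}(\overline{B})\to \HP^n\setminus B'$ that restricts to the chosen identification on the common collar $\partial B=\partial B'$. Using the $n+1$ affine charts of $\HP^n$ already displayed in the paper (the complements of the coordinate hyperplanes) and matching each to the corresponding chart of $Bl_0(\H^n)$ via the tautological identification that sends the hyperplane at infinity in $\HP^n$ to the exceptional divisor $E$, one obtains $\Phi$; the two sets of transition functions agree because both are given by the same quaternionic projective inversion formulas.

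\smallskip\noindent\textbf{The main obstacle.} The delicate point is the last step: verifying that the natural identification of a tubular neighborhood of $\HP^{n-1}\subset\HP^n$ with a tubular neighborhood of $E\subset Bl_0(\H^n)$ is not merely smooth but truly \emph{quaternionic} regular. Because the changes of chart involve non-commutative inversions such as $(q_1,\ldots,q_n)\mapsto(q_1^{-1},q_2 q_1^{-1},\ldots,q_n q_1^{-1})$, in which the order of multiplication matters, one must invoke the stem-function formalism of Definition \ref{stemfunction} to check slice regularity of each component, and then match the resulting expressions with those already in force on $Bl_0(\H^n)$. Once this coordinate-level compatibility is secured, the regularity of $Bl_p(M)\cong M\#\HP^n$ and the diffeomorphism statement follow immediately.
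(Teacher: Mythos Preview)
Your argument contains a genuine gap in the paragraph on regularity of the blow-up atlas. You write that the mixed transition $\psi\circ\varphi^{-1}\circ\pi\circ\chi^{-1}$ is slice regular because each of the two factors $\psi\circ\varphi^{-1}$ and $\pi\circ\chi^{-1}$ is. But this is precisely the step that fails in the quaternionic setting: as the paper stresses repeatedly, composition of slice regular functions is \emph{not} in general slice regular. Knowing that both halves are regular tells you nothing about their composite, so the mixed transitions with an arbitrary $M$-chart $\psi$ are not under control.

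The paper's proof avoids this trap by never composing with a general chart $\psi$ of $M$. It works only with the single fixed chart $\varphi$ (the one with $\varphi(p)=0$), so that $\psi\circ\varphi^{-1}$ is the identity, and computes the remaining transition $\varphi\circ(\varphi^{-1}\circ\pi_1)\circ\varphi_i^{-1}=\pi_1\circ\varphi_i^{-1}$ explicitly:
\[
(q_1,\ldots,q_n)\longmapsto(q_1q_i,\ldots,q_{i-1}q_i,\,q_i,\,q_{i+1}q_i,\ldots,q_nq_i),
\]
observing directly that each component is (right) slice regular. This explicit formula is what certifies compatibility; no appeal to closure under composition is made or needed. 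Correspondingly, the regular atlas produced on the glued manifold uses only $\varphi$ (among the $M$-charts) over the gluing collar, not an arbitrary $\psi$.

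A second, smaller point: your ``main obstacle'' is somewhat misplaced. The diffeomorphism $H$ that the paper uses to identify $\HP^n\setminus\{[1,0,\ldots,0]\}$ with $Bl_0(\H^n)$ (Proposition~\ref{diffeo1}) involves the conjugate $\bar w$ and is therefore \emph{not} slice regular; the paper neither claims nor needs it to be. What matters is not that the identification $\Phi$ itself be quaternionic regular, but that after the gluing the resulting manifold carry a regular atlas, and that is secured by the explicit transition displayed above, not by regularity of $H$ or of $\Phi$. So the effort you propose to invest in checking $\Phi$ via stem functions is aimed at the wrong target; the real work is the single explicit computation the paper carries out.
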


We conclude the paper by noticing that the Grassmannians of $p$-dimensional $\H$-planes in $\H^n$ are not quaternionic regular manifolds with respect to their natural atlases.

\section{Preliminaries on regular functions of several variables}

We report here for the convenience of the reader the definition of regular function of several quaternionic variables.
This notion was introduced by Ghiloni and Perotti in \cite{GhiloniPerotti}.
Let $\rr_n$ denote the real Clifford algebra of signature $(0,n)$ generated by $e_1,\ldots, e_n$. Each element of $\rr_n$ is of the form $x=\sum_{K\in \mathcal{P}(n)}e_K x_K$ where $\mathcal{P}(n)$ is the powerset of $\{1,\ldots ,n \}$, $e_K=e_{k_1}\cdots e_{k_s}$  (with ${k_1}<k_2<\cdots <{k_s}$) are the basis element of $\rr_n$ (for $K=\emptyset$ one gets the unit of $\rr_n$) and $x_K$ are real numbers.
\begin{definition}\label{stemfunction}
Let $D$ be an open subset of $\C^n$ invariant with respect to complex conjugation in each variable $z_1,\ldots, z_n$.  
A continuous function $F:D \to \HH \otimes_{\rr} \rr_n$ of the form $F=\sum_{K\in \mathcal{P}(n)}e_K F_K$ is called a {\em stem function} if it is Clifford intrinsic, i.e. for any $K\in \mathcal{P}(n)$, $h\in \{1,\ldots, n\} $ and $z=(z_1,\ldots, z_n)\in D$ the components $F_K:D \to \HH$ satisfy
\[ F_K(z_1,\ldots,z_{h-1}, \bar z_h, z_{h+1}, \ldots, z_n)=\left\{\begin{array}{c l}F_K(z) & \text{if } h\notin K,\\
-F_K(z) & \text{if } h \in K. 
\end{array} \right.\]
\end{definition}
Each stem function defined on $D\subseteq \C^n$ (open subset invariant by separate conjugation) induces a slice function defined on the {\em circular} subset of $\HH^n$ associated with $D$,
\[\Omega_D=\{(q_1,\ldots, q_n)\in \HH^n \ | \ q_h=x_h+y_hJ_h, J_h\in \mathbb{S}, \forall h=1,...,n; (x_1+ y_1i, \ldots, x_n+y_ni)\in D \}.\]
\begin{definition}
Let $F:D \to \HH \otimes_{\rr} \rr_n$, with $F=\sum_{K\in \mathcal{P}(n)}e_K F_K$, be a stem function. We define the {\em left slice function} $\mathcal{I}(f):\Omega_D \to \HH$ induced by $F$ as 
\[\mathcal{I}(F)(x_1+y_1J_1, \ldots, x_n+y_nJ_n):=\sum_{K\in \mathcal{P}(n)}J_K F_K(x_1+y_1i, \ldots, x_n+y_ni)\]
where 
\[J_K=\prod_{k\in K}^{\to}J_k=J_{k_1}\cdots J_{k_s}\]
is the ordered product.
\end{definition}


In order to give the definition of  {\em slice regular} 
functions of several quaternionic variables, we need to give a notion of holomorphicity for stem functions.

\begin{definition} 
For any $h=1, \ldots, n$ define the {\em complex structure} $\mathcal{J}_h$ on $\rr_n$ as
\[\mathcal{J}_h(e_K):=\left\{\begin{array}{c r}
-e_{K \setminus \{h\}} & \text{if } h\in K,\\
e_{K \cup \{h\}} & \text{if } h\notin K,
\end{array} \right.\] 
\end{definition}
\noindent Notice that $\mathcal{J}_h^2=-id_{\rr_n}$, for any $h=1, \ldots, n$ so that $\mathcal{J}_h$ defines an (almost) complex structure on $\rr_n$. 
\noindent It is possible to extend any almost complex structure $\mathcal{J}_h$ to $\HH\otimes_{\rr}\rr_n$ by setting
\[\mathcal{J}_h(a \otimes x)=a \otimes \mathcal{J}_h(x)\]
for any $a\in \HH$, $x\in \rr_n$.
\begin{definition}\label{regularseveral}
Let $D$ be an open subset of $\C^n$ invariant with respect to complex conjugation in each variable, let $F:D \to \HH \otimes_{\rr} \rr_n$ be a stem function of class $C^1$ and let $f=\mathcal{I}(F):\Omega_D\to \HH$ be the induced slice function. $F$ is called a {\em holomorphic stem function} if for any $h=1, \ldots, n$ and any fixed $z^0=(z_1^0,\ldots,z_n^0)\in D$, the function
\[F_h^{z^0}:D_h\to (\HH \otimes_{\rr}\rr_n, \mathcal{J}_h), \ \  z_h\mapsto F(z_1^0,\ldots, z_{h-1}^0, z_h,z_{h+1}^0,\ldots,z_n^0)\]
is holomorphic on a domain $D_h$ of $\C$ containing $z_h$. 
Equivalently, if 
\[\overline{\partial}_h F:= \frac{1}{2}\left(\frac{\partial F}{\partial x_h}+\mathcal{J}_h\frac{\partial F}{\partial y_h}\right)= 0\]
on $D$ for every $h=1,\ldots, n$.

If $F$ is holomorphic, the induced function $f=\mathcal{I}(F)$ is called a {\em left slice regular function} on $\Omega_D$. 
\end{definition}

It is possible to give a ``symmetric'' definition, namely the definition of {\em right} slice regular function.  
\begin{definition}
Let $F:D \to \HH \otimes_{\rr} \rr_n$, with $F=\sum_{K\in \mathcal{P}(n)}e_K F_K$, be a stem function. We define the {\em right slice function} $\mathcal{I}^r(F):\Omega_D \to \HH$ induced by $F$ as 
\[\mathcal{I}^r(F)(x_1+y_1J_1, \ldots, x_n+y_nJ_n):=\sum_{K\in \mathcal{P}(n)} F_K(x_1+y_1i, \ldots, x_n+y_ni)J_K\]
where 
\[J_K=\prod_{k\in K}^{\to}J_k=J_{k_1}\cdots J_{k_s}\]
is the ordered product.
\end{definition}

Right slice regularity can be expressed in terms of the family of (almost) complex structures (extended as before to $\HH \otimes_{\rr}\rr_n$) defined as follows.
\begin{definition} 
For any $h=1, \ldots, n$ define the {\em complex structure} $\mathcal{J}^r_h$ on $\rr_n$ as
\[\mathcal{J}^r_h(e_K):=-(\mathcal{J}_h(e_K^c))^c,\]
where $x\mapsto x^c$ denotes the Clifford conjugation in $\rr_n$. 
\end{definition}
\begin{definition}\label{peroni}
Let $D$ be an open subset of $\C^n$ invariant with respect to complex conjugation in each variable, let $F:D \to \HH \otimes_{\rr} \rr_n$ be a stem function of class $C^1$ and let $f=\mathcal{I}^r(F):\Omega_D\to \HH$ be the induced right slice function. $F$ is called a {\em right holomorphic stem function} if 
\[\overline{\partial}^r_h F:= \frac{1}{2}\left(\frac{\partial F}{\partial x_h}+\mathcal{J}^r_h\frac{\partial F}{\partial y_h}\right)= 0\]
on $D$ for every $h=1,\ldots, n$.

If $F$ is right holomorphic, the induced right slice function $f=\mathcal{I}^r(F)$ is called a {\em right slice regular function} on $\Omega_D$. 
\end{definition} 

\begin{example}
Let us  collect here some significant examples, referring to \cite{GhiloniPerotti} for an extensive description. We  point out that the ordering of the  variables is important for regularity:
\begin{itemize} 
\item the function $(q_1,q_2)\mapsto q_1^{-1}q_2$ is  left slice regular on $\H\setminus \{0\}\times \H$;
\item the function $(q_1,q_2)\mapsto q_2q_1$ is  right slice regular on $\H\times \H$;
\item the function $(q_1,q_2,q_2)\mapsto q_2q_1q_3$ is  neither right nor left slice regular.
\end{itemize}
\end{example}
\begin{definition} \label{nostra} Let $f=(f_1,f_2,\ldots,f_m):\Omega_D\to\H^m$ be a differentiable function. We say that $f$ is {\em slice regular}  if, and only if, each component $f_i$ is either  a right or a  left slice regular function of $n$ quaternionic variables.  
\end{definition}

\section{Quaternionic regular manifolds}

\noindent In this section, we give the announced definition of quaternionic regular manifold and we exhibit several examples.

\begin{definition}
A differentiable manifold $M$ of $4n$ real dimensions is a {\em quaternionic regular manifold} if it admits a differentiable atlas $\{(U_i, \varphi_i )\}_i$, $\varphi_i:U_i\to \HH^n$, whose transition functions 
\[\varphi_{ij}:=\varphi_j\circ \varphi_i^{-1}:\HH^n \supseteq \varphi_i(U_i\cap U_j)\to \varphi_j(U_i\cap U_j)\subseteq \HH^n,  \]
\[\varphi_{ij}:(q_1,\ldots , q_n) \mapsto (\varphi^1_{ij}(q_1, \ldots, q_n), \ldots, \varphi^n_{ij}(q_1, \ldots, q_n) )\]
are 
slice regular functions of $n$ quaternionic variables.    Such an atlas is called  a {\em regular atlas.}
\end{definition}

\begin{definition} 
Let $M$ and $N$ be two quaternionic regular manifolds with  regular atlases  $\{(U_i, \varphi_i )\}_i$ and $\{(V_j, \psi_j )\}_j$ respectively.
A differentiable function $f:M\to N$  is called  {\em quaternionic regular}  at a point $p\in M$ if, and only if,  there exist two systems of local coordinates $\{(U_{i_0}, \varphi_{i_0} )\}$ for $p$, and $\{(V_{j_0}, \psi_{j_0} )\}$ for $f(p)$, so that $\psi_{j_0}\circ f\circ{\varphi_{i_0}}^{-1}$ is slice regular. The function $f$ is called {\em quaternionic regular} if it is quaternionic regular at all $p\in M$. 
\end{definition}
\noindent Notice that the quaternionic regular functions between manifolds, that appear in the examples considered in this paper, have the property that  their expressions are slice regular  in all local coordinates of the given regular atlases.  However one cannot expect that this holds in general, since composition of slice regular functions does not always maintain regularity.

\subsection{Quaternionic projective spaces}

Of course $\H^n$ is a quaternionic regular manifold.
If $(q_1,\ldots,q_{n+1})\in \H^{n+1}\setminus \{0\},$ then $[q_1,\ldots,q_{n+1}]$ denotes the (right) vector line $\{(q_1\lambda,\ldots,q_{n+1}\lambda)\in \H^{n+1}:\lambda\in \H\}$ of $\H^{n+1}$. As usual $\H\P^n$ denotes the set of (right) vector lines in $\H^{n+1}$.
 It is easy to see that  the natural system of coordinates of a quaternionic projective space endows $\H\P^n$ with a structure of quaternionic regular manifold: the usual transition functions are easily seen to be slice regular.
\subsection{The blow-up of $\hh^n$}
In order to prove that the blow-up of $\H^n$ is quaternionic regular, we use
 the natural construction of a structure of differentiable manifold on the blow-up of $\HH^n$ at a point. To begin with, consider the projection onto the quotient space
\begin{equation*}
\varphi: \HH^n\setminus \{0\} \to \HP^{n-1}
\end{equation*}
defined, with classical notation, by
\begin{equation*}
\varphi(q_1,\ldots ,q_n)=[q_1,\ldots ,q_n],
\end{equation*}
and  set $\Gamma_\varphi \subset \HH^n\times \HP^{n-1}$ to be the graph of the map $\varphi$, i.e. the set
\begin{equation*}
\Gamma_\varphi =\{((q_1,\ldots ,q_n), [a_1,\ldots ,a_n] ) \in (\HH^n\setminus\{0\}) \times \HP^{n-1} : (q_1,\ldots ,q_n)\in [a_1,\ldots ,a_n] \}.
\end{equation*}

\begin{definition}\label{blow-up}
The subset $Bl_0(\HH^n)=\Gamma_\varphi \cup (\{0\}\times \HP^{n-1})$ of the Cartesian product $\HH^n\times \HP^{n-1}$, endowed with the natural induced topology, is called the \emph{blow-up of $\HH^n$ at $0$}. The subset $\{0\}\times \HP^{n-1}$ is called the exceptional set of $\BL$.
\end{definition}

\noindent The blow-up of $\HH^n$ at $0$ is a Hausdorff, paracompact and connected topological space, that can be described as follows:

\begin{proposition}\label{blow-up_parole}
The blow-up $\BL$ of $\HH^n$ at $0$ can be globally described as the set 
\begin{equation*}
A=\{((q_1,\ldots ,q_n), [a_1,\ldots ,a_n] ) \in \HH^n \times \HP^{n-1} : (q_1,\ldots ,q_n) \ \textnormal{is in the line} \ [a_1,\ldots ,a_n] \}.
\end{equation*}
\end{proposition}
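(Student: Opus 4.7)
The plan is to verify the set equality $Bl_0(\H^n)=A$ by partitioning both sides according to whether the $\H^n$-component is zero. Namely, I would decompose
\[
A = \bigl(A \cap ((\H^n\setminus\{0\})\times \H\P^{n-1})\bigr) \;\cup\; \bigl(A \cap (\{0\}\times \H\P^{n-1})\bigr),
\]
and note that by Definition \ref{blow-up} the set $Bl_0(\H^n)$ is decomposed in the same way as $\Gamma_\varphi \cup (\{0\}\times \H\P^{n-1})$. Thus it suffices to identify each piece separately.

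For the first piece, one simply compares the defining conditions: an element $((q_1,\dots,q_n),[a_1,\dots,a_n])$ with $(q_1,\dots,q_n)\neq 0$ belongs to $A$ exactly when $(q_1,\dots,q_n)$ lies on the line $[a_1,\dots,a_n]$, which is precisely the defining condition of $\Gamma_\varphi$. Hence $A \cap ((\H^n\setminus\{0\})\times \H\P^{n-1}) = \Gamma_\varphi$.

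For the second piece, I would observe that every right $\H$-line through the origin, being of the form $\{(a_1\lambda,\dots,a_n\lambda):\lambda\in\H\}$, contains the origin (take $\lambda=0$). Therefore the condition ``$(0,\dots,0)$ is in the line $[a_1,\dots,a_n]$'' is satisfied by every $[a_1,\dots,a_n]\in\H\P^{n-1}$, so $A \cap (\{0\}\times \H\P^{n-1}) = \{0\}\times \H\P^{n-1}$. Combining the two identifications gives $A = \Gamma_\varphi \cup (\{0\}\times \H\P^{n-1}) = Bl_0(\H^n)$.

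The statement is a purely set-theoretic reformulation, so I do not expect a genuine obstacle: the only conceptual point is that the apparent asymmetry in Definition \ref{blow-up} (graph away from $0$, whole projective space as fiber over $0$) is automatically absorbed by the uniform description in $A$, precisely because the origin lies on \emph{every} line through itself. No continuity, topology or regularity argument enters; the content of the proposition is that the two constructions of the blow-up as a subset of $\H^n\times \H\P^{n-1}$ coincide.
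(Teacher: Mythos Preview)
Your proposal is correct and follows essentially the same approach as the paper: both arguments split according to whether the $\HH^n$-component is zero, observe that $0$ lies on every line (giving the $\{0\}\times\HP^{n-1}$ piece), and note that for nonzero $(q_1,\dots,q_n)$ the condition ``lies on the line $[a_1,\dots,a_n]$'' is exactly the defining condition of $\Gamma_\varphi$. Your write-up is slightly more explicit about the two-way inclusion than the paper's, but the content is identical.
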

\begin{proof}
The point $0\in \HH^n$ belongs to all quaternionic $1$-dimensional subspaces of  $\HH^n$, and hence $(\{0\}\times \HP^{n-1})\in A$. Moreover, for $(q_1,\ldots ,q_n)\neq 0$, we have that $(q_1,\ldots ,q_n)$ belongs to the quaternionic $1$-dimensional subspace $[q_1,\ldots ,q_n]$ of $\HP^{n-1}$. As a consequence $\Gamma_\varphi$ is also contained in $A$. There are no other possible elements of $A$, which consequently coincides with $\BL$.
\end{proof}
We now state a couple of technical lemmas, which use the notion of Dieudonn\'e determinant of $2\times 2$ matrices with quaternionic entries. As it is known, the \emph{Dieudonn\'e determinant}, defined as
\begin{equation}\label{Dieudonne}
 det_\HH 
\begin{pmatrix}
a&b\\
c&d
\end{pmatrix}
=\sqrt{|a|^2|d|^2+|c|^2|b|^2-Re(c\bar ab\bar d)}
\end{equation}
generalizes the notion of  determinant to the case of quaternionic matrices (see, e.g. \cite{Bisi-Gentili}). 
Notice that a straightforward extension of the usual definition of determinant to matrices with quaternionic entries does not maintain the key properties that one expects from the determinant. For example, for $a, c, \lambda$ quaternions,  the two columns of the matrix 
\[
\begin{pmatrix}
a&a\lambda\\
c&c\lambda
\end{pmatrix}
\]
are $\HH-$linearly dependent, but the Cayley determinant  of the same matrix (see e.g. \cite{As})  $ac\lambda -ca\lambda$ does not vanish when $ac\neq ca$ and $\lambda\neq 0$. Moreover, if $ac\neq ca$, then for $\lambda=c^{-1}a^{-1}ca\neq 1$ we have that the two columns of the matrix 
\[
\begin{pmatrix}
a&a\\
c&c\lambda
\end{pmatrix}
\]
are $\HH-$linearly independent but $ac\lambda-ca=0$.

 \begin{lemma}\label{proprieta} The Dieudonn\'e determinant $det_\HH$ has the following properties:
 For any $\lambda, \mu 
    \in \mathbb{H}$ and any matrix $X=\begin{pmatrix}

             x & y \\
             z & t 
             \end{pmatrix} \in M(2, \mathbb{H})$ we have that
\begin{itemize}	\vskip .2cm    
	     \item[i)] $
	     det_{\mathbb{H}} \begin{pmatrix}
             x & y\lambda \\
             z & t\lambda \\
             \end{pmatrix}= 
	     det_{\mathbb{H}}\begin{pmatrix}
             x\lambda  & y \\
             z\lambda  & t \\
             \end{pmatrix} =|\lambda|det_{\mathbb{H}}
	     \begin{pmatrix}
             x & y \\
             z & t\\
             \end{pmatrix}
	     $ \vskip .4cm
\item[ii)] $
det_{\mathbb{H}}\begin{pmatrix}
             \mu x & \mu y \\
             z & t \\
             \end{pmatrix}= 
	     det_{\mathbb{H}}\begin{pmatrix}
             x  & y \\
             \mu z  & \mu t \\
             \end{pmatrix} =|\mu|det_{\mathbb{H}}
	     \begin{pmatrix}
             x & y \\
             z & t\\
             \end{pmatrix}
	    $ \vskip .3cm
 \item[iii)] If the matrix $Y$ is obtained from the matrix $X$ by: (a)
substituting to a row the sum of the two rows, or (b) substituting to
a column the sum of the two columns, then
$det_{\mathbb{H}}(X)=det_{\mathbb{H}}(Y).$
\end{itemize}
 \end{lemma}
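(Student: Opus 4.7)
All three properties reduce to a direct substitution into the formula \eqref{Dieudonne}, together with four elementary facts about quaternions: $\overline{\alpha\beta}=\bar\beta\bar\alpha$ (hence $|\alpha\beta|^2=|\alpha|^2|\beta|^2$); the centrality of the real scalar $\alpha\bar\alpha=|\alpha|^2\in\rr$; cyclicity of the real part, $\Re(\alpha\beta)=\Re(\beta\alpha)$; and $\Re(\alpha)=\Re(\bar\alpha)$.

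For (i), right-multiplying the second column by $\lambda$ turns the radicand into $|x|^2|t|^2|\lambda|^2+|z|^2|y|^2|\lambda|^2-\Re\bigl(z\bar x\,y\lambda\bar\lambda\,\bar t\bigr)$. Since $\lambda\bar\lambda=|\lambda|^2$ is real and central, the mixed term equals $|\lambda|^2\Re(z\bar xy\bar t)$, so the whole radicand scales uniformly by $|\lambda|^2$ and $|\lambda|$ comes out of the square root. The first matrix of (i) is handled identically via $z\lambda\,\overline{x\lambda}=|\lambda|^2\,z\bar x$. Property (ii) is the mirror statement: for the first row, $(\overline{\mu x})(\mu y)=\bar x|\mu|^2y$ directly; for the second row, one has $\Re(\mu z\,\bar xy\bar t\,\bar\mu)=\Re(\bar\mu\mu\,z\bar xy\bar t)=|\mu|^2\Re(z\bar xy\bar t)$ by cyclicity of $\Re$.

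Property (iii) is the only one that requires more than a line. Consider the row case $Y=\begin{pmatrix}x+z&y+t\\z&t\end{pmatrix}$; the column case is formally identical. Expand
\[
|x+z|^2=|x|^2+|z|^2+2\Re(x\bar z),\qquad |y+t|^2=|y|^2+|t|^2+2\Re(y\bar t),
\]
and
\[
z(\overline{x+z})(y+t)\bar t=z\bar xy\bar t+z\bar x\,t\bar t+z\bar z\,y\bar t+z\bar z\,t\bar t.
\]
Using $z\bar z=|z|^2$ and $t\bar t=|t|^2$ to pull real scalars out of $\Re$, and $\Re(x\bar z)=\Re(z\bar x)$ (since the two are quaternionic conjugates of each other), one checks that the extra contributions $|z|^2|t|^2$, $|t|^2\Re(x\bar z)$ and $|z|^2\Re(y\bar t)$ coming from the diagonal expansions cancel exactly against the corresponding extras from the mixed term, leaving $|x|^2|t|^2+|z|^2|y|^2-\Re(z\bar xy\bar t)=\det_{\HH}(X)^2$ under the radical.

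The only real obstacle is notational: the careful bookkeeping in (iii) of the several cross-terms produced by the substitution and the verification that they cancel on the nose. No genuine difficulty from non-commutativity of $\HH$ arises, because the only quaternions that need to be commuted past other factors are the real scalars $|\alpha|^2$, and cyclicity of $\Re$ handles the remaining reorderings.
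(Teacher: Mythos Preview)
Your proposal is correct and follows exactly the approach the paper takes: the paper's entire proof is the one-line remark ``A direct substitution and computation show the assertions (see also \cite{Bisi-Gentili}),'' and you simply carry out that substitution explicitly, using multiplicativity of the modulus, centrality of $|\alpha|^2$, and cyclicity of $\Re$. There is nothing to add.
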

 \begin{proof} A direct substitution and computation show the
     assertions (see also \cite{Bisi-Gentili}).
     \end{proof}
\noindent It is natural at this point to search for the inverse of a $2\times 2$ quaternionic matrix having nonvanishing Dieudonn\'e determinant (see \cite{Bisi-Gentili}). 
%
%
\noindent If  the matrix 
 $A= \left[ \begin{array}{ll}

             a & b \\
             c & d \\
             \end{array} \right] \in M(2, \mathbb{H})
$ 
is right-invertible, then $a\neq 0$ or $b\neq 0$. If $a\neq 0$, then the right inverse of $A$ is
\begin{equation}\label{inversa a}
  \left[ \begin{array}{ll}
             x & y \\
             t & z \\
             \end{array} \right] =
               \left[ \begin{array}{ll}
            a^{-1} + a^{-1}b(d-ca^{-1}b)^{-1}ca^{-1} &
-a^{-1}b(d-ca^{-1}b)^{-1} \\
             -(d-ca^{-1}b)^{-1}ca^{-1} & (d-ca^{-1}b)^{-1} \\
             \end{array} \right].
\end{equation}
If instead we are in the case $b\ne 0$ then, the right inverse has the form
\begin{equation}\label{inversa b}
  \left[ \begin{array}{ll}
             x & y \\
             t & z \\
             \end{array} \right] =
               \left[ \begin{array}{ll}
            -(c-db^{-1}a)^{-1}db^{-1} & (c-db^{-1}a)^{-1}\\
            b^{-1}+b^{-1}a(c-db^{-1}a)^{-1}db^{-1} &
-b^{-1}a(c-db^{-1}a)^{-1} \\
             \end{array} \right].
\end{equation}
\noindent As one may expect, when $ab\ne 0$ then the two forms
    (\ref{inversa a}) and (\ref{inversa b}) of the inverse of $A$ do
coincide. If $abcd\neq 0$ then the inverse matrix of $A$ assumes an
even nicer
form,
\begin{equation}\label{bilatera}
  \left[ \begin{array}{ll}
             x & y \\
             t & z \\
             \end{array} \right] =
               \left[ \begin{array}{ll}
            (a-bd^{-1}c)^{-1} & (c-db^{-1}a)^{-1} \\
            (b-ac^{-1}d)^{-1} &  (d-ca^{-1}b)^{-1} \\
             \end{array} \right]
\end{equation}
which allows a Cramer-type rule to solve $2 \times 2$ linear systems
with quaternionic coefficients (see again \cite{Bisi-Gentili}).

\begin{lemma}\label{proporzionali}
Let $(q_1,\ldots ,q_n)$ and $(a_1,\ldots ,a_n)$ be elements of $\HH^n$, with $(a_1,\ldots ,a_n) \neq 0$. Then there exists $\lambda\in \HH$ such that 
$(q_1,\ldots ,q_n)=(a_1,\ldots ,a_n)\lambda$ if, and only if, the following system of $n \choose 2$ equations is satisfied:
\begin{equation*}
det_\HH 
\begin{pmatrix}
q_i&a_i\\
q_j&a_j
\end{pmatrix}=0
\end{equation*}
for all $1\leq i<j\leq n$.
\end{lemma}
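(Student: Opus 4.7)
Both implications hinge on the fact that the Dieudonn\'e determinant of a $2\times 2$ quaternionic matrix vanishes if and only if its two columns are right $\HH$-linearly dependent, equivalently if and only if the matrix is not right-invertible. This equivalence is readable from the explicit inverse formulas (\ref{inversa a})--(\ref{inversa b}).

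\emph{Necessity.} If $(q_1,\dots,q_n) = (a_1,\dots,a_n)\lambda$, then for every $1\le i<j\le n$ the first column $(q_i,q_j)^T$ of the matrix $\begin{pmatrix} q_i & a_i \\ q_j & a_j\end{pmatrix}$ is the right multiple by $\lambda$ of its second column $(a_i,a_j)^T$. The columns are therefore right $\HH$-linearly dependent, the matrix is not right-invertible, and its Dieudonn\'e determinant vanishes. Equivalently, by Lemma~\ref{proprieta}(i) one factors $\lambda$ out of the first column, reducing to the determinant of a matrix with two equal columns, which is zero by the same invertibility argument.

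\emph{Sufficiency.} Since $(a_1,\dots,a_n)\neq 0$, pick an index $k$ with $a_k\neq 0$ and set $\lambda := a_k^{-1} q_k$, so that $q_k = a_k\lambda$ automatically. For $i\neq k$, the hypothesis makes the matrix $\begin{pmatrix} q_i & a_i \\ q_k & a_k\end{pmatrix}$ non-right-invertible, and hence its columns are right $\HH$-linearly dependent. As the second column $(a_i,a_k)^T$ is nonzero, there exists $\mu\in\HH$ with $(q_i,q_k)^T = (a_i,a_k)^T\mu$. Reading the second component yields $\mu = a_k^{-1} q_k = \lambda$, and then the first component gives $q_i = a_i\lambda$, as required.

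\emph{Main obstacle.} The only delicate step is the equivalence between $det_\HH = 0$ and non-right-invertibility. Multiplicativity of $det_\HH$ takes care of one direction. For the other, the explicit right-inverse formulas (\ref{inversa a})--(\ref{inversa b}) exhaust all cases: in the representative situation $q_i\neq 0$, formula (\ref{inversa a}) exhibits a right inverse exactly when $a_k - q_k q_i^{-1} a_i \neq 0$, and one checks against (\ref{Dieudonne}) that this condition coincides with $det_\HH\neq 0$; the degenerate case $q_i = 0$ is settled by direct inspection of (\ref{Dieudonne}) together with formula (\ref{inversa b}). Once this equivalence is available, both implications collapse to the short algebraic manipulations above.
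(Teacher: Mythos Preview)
Your argument is correct, but it follows a different route from the paper's.

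The paper never invokes the equivalence ``$\det_\HH=0$ iff columns right-dependent'' as a black box. Instead it works entirely with the column-operation rules of Lemma~\ref{proprieta} and the explicit formula~(\ref{Dieudonne}). For necessity, it factors $\lambda$ out of the first column and then subtracts columns to produce a zero column. For sufficiency, it fixes an index with $a_1\neq 0$, sets $\lambda$ so that $q_1=-a_1\lambda$, multiplies the second column of $\begin{pmatrix} q_1 & a_1\\ q_j & a_j\end{pmatrix}$ by $\lambda$, adds it to the first, and reads off from~(\ref{Dieudonne}) that $|a_1\lambda|\,|q_j+a_j\lambda|=0$; this forces $q_j=-a_j\lambda$ (the degenerate case $\lambda=0$ being handled by returning to the original determinant). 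Your approach, by contrast, packages all of this into the single structural fact that vanishing Dieudonn\'e determinant characterises non-invertibility, which you then justify via the inverse formulas (\ref{inversa a})--(\ref{inversa b}) and the identity $\det_\HH\begin{pmatrix}a&b\\c&d\end{pmatrix}=|a|\,|d-ca^{-1}b|$ for $a\neq 0$. This is conceptually cleaner and scales better, but it requires that auxiliary identity (which the paper does not record), whereas the paper's argument is entirely self-contained from Lemma~\ref{proprieta} and~(\ref{Dieudonne}). One small point worth making explicit in your write-up: for $i>k$ you are implicitly using that $\det_\HH$ is invariant under swapping rows, which is true but not among the listed properties in Lemma~\ref{proprieta}.
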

\begin{proof}
The proof is a consequence of Lemma \ref{proprieta}. In fact if there is $\lambda\in \HH$ such that $(a_1,\ldots ,a_n)\lambda=(q_1,\ldots ,q_n)$, then by Lemma \ref{proprieta} $iii)$, for all $1\leq i<j\leq n$, we get 
\begin{equation*}
det_\HH 
\begin{pmatrix}
q_i&a_i\\
q_j&a_j
\end{pmatrix}= 
det_\HH \begin{pmatrix}
a_i\lambda&a_i\\
a_j\lambda&a_j
\end{pmatrix}=|\lambda|det_\HH \begin{pmatrix}
a_i&a_i\\
a_j&a_j
\end{pmatrix}=|\lambda|det_\HH \begin{pmatrix}
0&a_i\\
0&a_j
\end{pmatrix}=0.
\end{equation*}
Conversely, if  for all $1\leq i<j\leq n$,
\begin{equation*}
det_\HH 
\begin{pmatrix}
q_i&a_i\\
q_j&a_j
\end{pmatrix}=0,
\end{equation*}
then in particular (re-ordering the entries if necessary) we have $a_1\neq 0$ and
\begin{equation}\label{n-equa}
det_\HH 
\begin{pmatrix}
q_1&a_1\\
q_j&a_j
\end{pmatrix}=0
\end{equation}
for all $1<j\leq n$. If $\lambda\in \HH$ is such that $q_1=-a_1\lambda$, then again by Lemma \ref{proprieta}
\begin{eqnarray}\label{n-equa1}
0=det_\HH 
\begin{pmatrix}
q_1&a_1\\
q_j&a_j
\end{pmatrix}=
det_\HH 
\begin{pmatrix}
q_1&a_1\lambda\\
q_j&a_j\lambda
\end{pmatrix} \\
=det_\HH 
\begin{pmatrix}
0&a_1\lambda\\
q_j+a_j\lambda&a_j\lambda
\end{pmatrix}=\sqrt{|a_1\lambda|^2|q_j+a_j\lambda|^2}
\end{eqnarray}
and hence either $\lambda=0$ (and hence $q_1=0$) and $q_j=0$, or $q_j+a_j\lambda=0$ (for all $1<j\leq n$). In any case $q_j=-a_j\lambda$ for all $1<j\leq n$. We then conclude that $(q_1,\ldots ,q_n)=(a_1,\ldots ,a_n)\lambda$. 
\end{proof}

We can now prove

\begin{theorem}\label{DifferentiableManifold}
The blow-up $\BL$ of $\HH^n$ at $0$ is a  quaternionic regular manifold of real dimension $4n$. Moreover the exceptional set $\{0\}\times \HP^{n-1}$ is a quaternionic  submanifold of real dimension $4n-4$.
\end{theorem}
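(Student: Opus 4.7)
I will mimic the standard affine atlas on $\HP^{n-1}$, adapted to the blow-up. For each $k=1,\ldots, n$ set
\[
U_k=\{((q_1,\ldots,q_n),[a_1,\ldots,a_n])\in \BL : a_k\neq 0\}
\]
and define $\varphi_k:U_k\to \HH^n$ by exploiting the unique right-scalar representative of the line having $a_k=1$:
\[
\varphi_k((q_1,\ldots,q_n),[a_1,\ldots,a_n])=(a_1a_k^{-1},\ldots,a_{k-1}a_k^{-1},\,q_k,\,a_{k+1}a_k^{-1},\ldots,a_na_k^{-1}),
\]
with $q_k$ in the $k$-th slot. The inverse sends $(s_1,\ldots,s_n)$ to the pair $((s_1s_k,\ldots,s_{k-1}s_k,s_k,s_{k+1}s_k,\ldots,s_ns_k),\,[s_1,\ldots,s_{k-1},1,s_{k+1},\ldots,s_n])$; Lemma \ref{proporzionali} together with Proposition \ref{blow-up_parole} guarantees that this really produces an element of $\BL$, and it is clear that $\varphi_k$ is a homeomorphism onto $\HH^n$. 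The $U_k$ cover $\BL$ because every line in $\HP^{n-1}$ has at least one non-vanishing coordinate.

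The core of the proof is the verification that every transition map $\varphi_{kj}=\varphi_j\circ\varphi_k^{-1}$ is slice regular in the sense of Definition \ref{nostra}. A direct substitution on $\varphi_k(U_j\cap U_k)=\{s_j\neq 0\}$ gives the components $(t_1,\ldots,t_n)$:
\[
t_j=s_js_k,\qquad t_k=s_j^{-1},\qquad t_i=s_is_j^{-1}\ \ (i\neq j,k).
\]
The component $t_k=s_j^{-1}$ depends on a single variable and is slice regular (both left and right) on $\{s_j\neq 0\}$, so it remains slice regular as a function of $n$ variables. The component $t_j=s_js_k$ involves only the two variables $s_j,s_k$ and, in perfect analogy with the paper's example $(q_1,q_2)\mapsto q_2q_1$, is right slice regular when $j>k$ and left slice regular when $j<k$. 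The components $t_i=s_is_j^{-1}$ follow the paper's example $(q_1,q_2)\mapsto q_1^{-1}q_2$: they are left slice regular when $i<j$ and right slice regular when $i>j$, the corresponding stem function being obtained by tensoring the linear stem function of $s_i$ with the stem function of $s_j^{-1}$ in the appropriate order. Definition \ref{nostra} allows each component to be independently left or right slice regular, so $\{(U_k,\varphi_k)\}_{k=1}^n$ is a regular atlas and $\BL$ becomes a quaternionic regular manifold of real dimension $4n$.

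For the exceptional set: a point of $\{0\}\times\HP^{n-1}$ lies in $U_k$ precisely when $a_k\neq 0$, and $\varphi_k$ sends it to a tuple with $s_k=q_k=0$. Hence the exceptional set is cut out in each chart by the single equation $s_k=0$, realizing it as a closed $4(n-1)$-dimensional quaternionic affine subspace of $\varphi_k(U_k)$. Restricting the transitions to $\{s_k=0\}$ (equivalently $\{t_j=0\}$) kills the component $t_j=s_js_k$ and leaves $t_k=s_j^{-1}$ and $t_i=s_is_j^{-1}$ for $i\neq j,k$, which are exactly the slice regular transition functions of the standard atlas of $\HP^{n-1}$. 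This shows that $\{0\}\times \HP^{n-1}$ is a quaternionic regular submanifold of real dimension $4n-4$.

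\textbf{Main obstacle.} The technical crux is the slice regularity check for the bilinear components $t_j=s_js_k$ and $t_i=s_is_j^{-1}$, seen as functions of all $n$ quaternionic variables. The verification has to be done through the stem-function description of Definition \ref{regularseveral} (or Definition \ref{peroni} on the right-handed side), the delicate point being the correct choice of left versus right regularity according to the relative ordering of the indices; once the stem functions are written down explicitly, the holomorphicity conditions $\overline\partial_h F=0$ (or $\overline\partial_h^r F=0$) reduce to a short computation powered by the one-variable slice regularity of $q\mapsto q^{-1}$ on $\HH\setminus\{0\}$.
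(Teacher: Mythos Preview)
Your proposal is correct and follows essentially the same approach as the paper: the atlas $\{(U_k,\varphi_k)\}$, the formula for $\varphi_k^{-1}$, the explicit transition components $t_j=s_js_k$, $t_k=s_j^{-1}$, $t_i=s_is_j^{-1}$, and the identification of the exceptional set with the hyperplane $\{s_k=0\}$ all coincide with the paper's proof. You in fact supply more detail than the paper does on which components are left versus right slice regular (the paper simply asserts this), so your ``main obstacle'' discussion is a useful complement rather than a gap.
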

\begin{proof}

To begin with notice that the two natural projections 
\begin{eqnarray*}
\pi_1 &:& \BL \to \HH^n \qquad \pi_1((q_1,\ldots ,q_n), [a_1,\ldots ,a_n] )= (q_1,\ldots ,q_n)\\
\pi_2 &: &\BL \to \HP^{n-1} \qquad \pi_2((q_1,\ldots ,q_n), [a_1,\ldots ,a_n] )= [a_1,\ldots ,a_n]
\end{eqnarray*}
are continuous maps from $\BL$ to $\HH^n$ and $\HP^{n-1}$, respectively. If we use homogeneous coordinates $[a]=[a_1,\ldots,a_n]$  in  $\HP^{n-1}$ and set $q=(q_1,\ldots ,q_n)\in \HH^n$ then, using the Dieudonn\'e determinant \eqref{Dieudonne}, we can write
\begin{equation*}
\BL=\{(q, [a]) \in  \HH^n \times \HP^{n-1} : |q_i|^2|a_j|^2+|q_j|^2|a_i|^2-Re(q_j\bar q_ia_i\bar a_j)=0, \tiny{1\le i<j\le n} \}.
\end{equation*}
On $\HP^{n-1}$ we use the $n$ natural systems of coordinates $\left\{ (U_i, p_i)\right\}_{i=1}^n$ where $U_i=\{[a] \in \HP^{n-1} : a_i\neq 0\}$ and where  $p_i:U_i \to \HH^{n-1}$ is such that  
\begin{equation}
p_i([a_1,\ldots,a_{i-1}, a_i, a_{i+1}, \ldots a_n])=(a_1a_i^{-1},\ldots, a_{i-1}a_i^{-1},a_{i+1}a_i^{-1}, \ldots, a_{n}a_i^{-1})
\end{equation}
In turn, a $4n$ dimensional, differentiable atlas $\left\{ (V_i, \varphi_i)\right\}_{i=1}^n$ can be constructed on $\BL$ by considering the $n$ open sets $V_i=\pi_2^{-1}(U_i)$ that cover all of $ \BL$ and, for each $i=1,\ldots,n,$ the homeomorphism of $V_i$ onto $\HH^n$ defined by
\begin{equation*}
\varphi_i(q,[a])= (a_1a_i^{-1},\ldots, a_{i-1}a_i^{-1},q_i,a_{i+1}a_i^{-1}, \ldots, a_{n}a_i^{-1})
\end{equation*}
whose inverse is 
\begin{equation*}
\varphi_i^{-1}(b_1,\ldots,b_n)= ((b_1b_i,\ldots, b_{i-1}b_i,b_i,b_{i+1}b_i, \ldots, b_{n}b_i) , [(b_1,\ldots, b_{i-1},1,b_{i+1}, \ldots, b_{n}])
\end{equation*}
At this point, for $1\leq i\neq j\leq n $ we can easily compute the change of coordinates
\begin{eqnarray*}
\varphi_j\circ \varphi_i^{-1}(b_1,\ldots,b_n)= \varphi_j((b_1b_i,\ldots, b_{i-1}b_i,b_i,b_{i+1}b_i, \ldots, b_{n}b_i) , [(b_1,\ldots, b_{i-1},1,b_{i+1}, \ldots, b_{n}])\\=
(b_1b_j^{-1},\ldots,b_{i-1}b_j^{-1},b_j^{-1},b_{i+1}b_j^{-1},\ldots,b_{j-1}b_j^{-1},b_jb_i,b_{j+1}b_j^{-1},\ldots,b_nb_j^{-1})
\end{eqnarray*}
that is clearly a diffeomorphism. To prove that $\pi_1$ is differentiable it is enough to compute explicitly the map $\pi_1\circ \varphi_i^{-1}: \HH^n \to \HH^n$
\begin{eqnarray}\label{cambio}
\pi_1\circ \varphi_i^{-1}(b_1,\ldots, b_n)=(b_1b_i,\ldots, b_{i-1}b_i,b_i,b_{i+1}b_i, \ldots, b_{n}b_i)
\end{eqnarray}
and see that this map is differentiable  for every $i=1,\ldots,n$. Analogously, the fact that the map $p_i \circ \pi_2\circ \varphi_i^{-1}: \HH^n \to \HH^{n-1}$ 
\begin{eqnarray*}
p_i\circ \pi_2\circ \varphi_i^{-1}(b_1,\ldots, b_n)=(b_1,\ldots, b_{i-1},b_{i+1}, \ldots, b_{n})
\end{eqnarray*}
is differentiable for every $i=1,\ldots,n$ proves that $\pi_2$ is differentiable.  The transition functions $\varphi_j\circ \varphi_i^{-1}$ have components which are right or left slice regular, thus showing that $Bl_0(\H^n)$ is a quaternionic regular manifold.

To prove that the exceptional set is a quaternionic regular submanifold of real dimension $4n-4$, we can notice that, for $i=1,\ldots,n$, the intersection $(\{0\}\times \HP^{n-1})\cap V_i$  is mapped by $\varphi_i$ to the $\HH-$hyperplane $b_i=0$, which has real dimension $4n-4$. 
\end{proof}


As in the  case of $\BL$, we can define the blow-up $Bl_0(B^n)$ of the closed unit ball $B^n=\overline {\mathbb{B}}^n$ of $\HH^n$ at the point $0$. 
The restriction to $\bl$ of the charts of the atlas of $\BL$ define on $\bl$ a natural structure of regular quaternionic manifold (with boundary).

\subsection{Quaternionic affine manifolds} The Dieudonn\'e determinant, that we have encountered in dimension 2, has been defined for any $n\times n$ quaternionic matrix $A\in M(n,\H)$ (see, e.g. \cite{dieu}). It can be used, in the usual fashion,  to define the group of quaternionic $n\times n$ invertible matrices $GL(n, \H)$. For $Q=(q_1,q_2,\ldots,q_n)\in \H^n$,  we can define the group of all quaternionic affine transformations 
$$
\mathcal A(n,\H)=\{Q \mapsto QA+B : A\in GL(n,\H), B=(b_1, b_2, \ldots , b_n)\in \H^n \}
$$ 
which is included in the class of slice regular functions. In complete analogy with what Kobayashi does in the complex case, \cite{Kobayashi}, we give the following: 

\begin{definition}
A differentiable manifold $M$ of $4n$ real dimensions has  a \emph{quaternionic affine structure} if it admits a differentiable atlas whose transition functions are restrictions of quaternionic affine functions of $\mathcal A(n, \H)$.     
\end{definition}

\noindent In particular, differentiable manifolds endowed with a  quaternionic affine structure are quaternionic regular. This can be used to construct  a large class of quaternionic regular manifolds; indeed, for any subgroup $\Gamma \subset \mathcal A(n, \H)$ which acts freely and properly discontinuously on $\H^n$, the quotient space
\[
M=\H^n\slash \Gamma
\]
admits an atlas whose transition functions are slice regular belonging to $\Gamma \subset \mathcal A(n,\H)$, and hence has  a quaternionic affine structure. It is worthwhile noticing that the \emph{quaternionic manifolds} studied by Sommese in \cite{So} all admit a quaternionic affine structure, and hence many significant examples can be found in his paper.

%

\subsection{The connected sum of a quaternionic regular manifold $M$ and $\HP^n$}
In what follows, the elements of the space $\HH^{n+1}$ are denoted by $(w, q_1,\ldots,q_n)$, and we set $|q|^2=q_1\bar  q_1+\cdots+q_n\bar q_n=|q_1|^2+\cdots+|q_n|^2$. We consider the quaternionic regular manifold $\HP^{n}$,  and the map 
$$h: (\HP^{n} \setminus {[1,0,\ldots, 0]}) \to \HH^{n}$$ 
defined by 
\begin{equation*}
h([w, q_1, \ldots, q_{n}] )=
(\frac{q_1}{|q|^{2}} \bar w, \ldots, \frac{q_n}{|q|^{2}}\bar w)
\end{equation*}
Let us notice that $h$ is well defined, since 
\begin{eqnarray*}
&&h([w\lambda, q_1\lambda, \ldots, q_{n}\lambda] ) = (\frac{q_1\lambda}{|q|^{2}|\lambda|^2} \bar \lambda \bar w, \ldots, \frac{q_n\lambda}{|q|^{2}|\lambda|^2}\bar \lambda \bar  w)\\
&&= (\frac{q_1}{|q|^{2}} \bar w, \ldots, \frac{q_n}{|q|^{2}}\bar w)=h([w, q_1, \ldots, q_{n}] ),
\end{eqnarray*}
for all non zero quaternions $\lambda$. It is straightforward to prove that $h$ is differentiable, and we have 
\begin{proposition}\label{diffeo1}
The map $H: (\HP^{n} \setminus {[1,0,\ldots, 0]}) \to \BL$ defined by 
\begin{eqnarray}
H([w, q_1, \ldots, q_{n}] )=((\frac{q_1}{|q|^{2}} \bar w, \ldots, \frac{q_n}{|q|^{2}}\bar w), [\frac{q_1}{|q|^{2}}, \ldots, \frac{q_n}{|q|^{2}}])
\end{eqnarray}
is a diffeomorphism.
\end{proposition}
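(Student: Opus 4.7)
The plan is to exhibit an explicit two-sided inverse $G\colon \BL \to \HP^n \setminus \{[1,0,\ldots,0]\}$ for $H$ and then verify smoothness of both maps in the natural charts on the two manifolds. For the construction of $G$, I first observe that by Proposition \ref{blow-up_parole} and Lemma \ref{proporzionali}, every $((p_1,\ldots,p_n),[a_1,\ldots,a_n])\in \BL$ satisfies $(p_1,\ldots,p_n)=(a_1,\ldots,a_n)\,\mu$ for a quaternion $\mu$ uniquely determined once a homogeneous representative $a$ has been fixed (with $\mu=0$ on the exceptional set). I then set
\[
G\bigl((p_1,\ldots,p_n),[a_1,\ldots,a_n]\bigr) :=\bigl[\,|a|^2\bar\mu,\ a_1,\ldots,a_n\,\bigr], \qquad |a|^2:=|a_1|^2+\cdots+|a_n|^2.
\]

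To show that $G$ descends to $\HP^n$, I replace $a$ by $a\nu$ with $\nu\in\HH\setminus\{0\}$, so that $\mu$ transforms as $\mu\mapsto \nu^{-1}\mu$. Using $\overline{\nu^{-1}}=\nu/|\nu|^2$, the identity $|a\nu|^2\,\overline{\nu^{-1}\mu}=|a|^2\bar\mu\,\nu$ holds, and right-multiplication of every entry of the output tuple by $\nu$ then identifies the two expressions in $\HP^n$. The identities $G\circ H=\Id$ and $H\circ G=\Id$ reduce to short quaternionic manipulations: starting from $[w,q]$ with $q\neq 0$ and using the representative $a_k=q_k/|q|^2$ yields $\mu=\bar w$ and $|a|^2=1/|q|^2$, whence $G(H([w,q]))=[w/|q|^2,q_1/|q|^2,\ldots,q_n/|q|^2]=[w,q]$; conversely, setting $w=|a|^2\bar\mu$ and $q=a$ gives $|q|^2=|a|^2$ and $q_k\bar w/|q|^2=a_k\mu=p_k$.

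For smoothness, I cover $\HP^n\setminus\{[1,0,\ldots,0]\}$ by the $n$ affine charts $W_k=\{[w,q]:q_k\neq 0\}$ with
$\psi_k([w,q])=(wq_k^{-1},q_1q_k^{-1},\ldots,q_{k-1}q_k^{-1},q_{k+1}q_k^{-1},\ldots,q_nq_k^{-1})$,
and cover $\BL$ by the charts $(V_k,\varphi_k)$ of Theorem \ref{DifferentiableManifold}. Since both families are indexed by the position at which the projective ``line'' coordinate is required to be nonzero, one has $H(W_k)\subseteq V_k$ and $G(V_k)\subseteq W_k$. Setting $|t|^2:=1+\sum_{j\neq k}|t_j|^2$, a direct computation produces
\[
\varphi_k\circ H\circ \psi_k^{-1}(s,t)=\bigl(t_1,\ldots,t_{k-1},\,\bar s/|t|^2,\,t_{k+1},\ldots,t_n\bigr),
\]
\[
\psi_k\circ G\circ \varphi_k^{-1}(b)=\Bigl(\bigl(1+\textstyle\sum_{j\neq k}|b_j|^2\bigr)\bar b_k,\ b_1,\ldots,b_{k-1},b_{k+1},\ldots,b_n\Bigr),
\]
both of which are real-analytic in the real coordinates, completing the argument that $H$ and $G$ are mutually inverse diffeomorphisms.

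The main technical obstacle is the well-definedness of $G$: the scalar $\mu$ with $p=a\mu$ transforms by \emph{left} multiplication when $a$ is rescaled on the \emph{right}, whereas projective equivalence in $\HP^n$ acts by right multiplication on all entries. Inserting the factor $|a|^2$ and passing from $\mu$ to $\bar\mu$ is exactly what makes the two actions compatible, via the quaternionic identity $\nu=|\nu|^2\bar\nu^{-1}$; without this correction neither $[\mu,a]$ nor $[\bar\mu,a]$ would descend to a well-defined point of $\HP^n$. Once this subtlety is handled, the chart-level smoothness calculations are routine.
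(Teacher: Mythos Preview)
Your proof is correct and follows essentially the same strategy as the paper's: both write down an explicit inverse for $H$ and verify smoothness via the natural charts $(V_k,\varphi_k)$ on $\BL$ and the affine charts on $\HP^n$, obtaining the same coordinate expressions (your $(t_1,\ldots,\bar s/|t|^2,\ldots,t_n)$ and $((1+\sum_{j\neq k}|b_j|^2)\bar b_k,b_1,\ldots,\widehat{b_k},\ldots,b_n)$ agree with the paper's). Your inverse $G((p,[a]))=[\,|a|^2\bar\mu,\,a\,]$ is the same map as the paper's $H^{-1}((bu,[b]))=[\bar u,\,b/|b|^2]$ after right-multiplying the representative by $|b|^2$; the main addition in your write-up is the explicit check that $G$ is well defined under rescaling $a\mapsto a\nu$, which the paper omits.
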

\begin{proof}
Since there exists $i\in \{1,\ldots, n\}$ such that $q_i\neq 0$, then  we can use the charts $U_{i+1}$ and $V_{i}$, (see the proof of Theorem \ref{DifferentiableManifold}) and write the map $H$ as
\begin{eqnarray*}
&&(w, q_1, \ldots, q_{i-1}, q_{i+1}, \ldots, q_n)\mapsto \\ &&\mapsto (w, q_1, \ldots, q_{i-1}, 1, q_{i+1}, \ldots, q_n) \mapsto (q_{1}, \ldots, q_{i-1}, \frac{\bar w}{|q|^{2}}, q_{i+1},\ldots, q_n).
\end{eqnarray*}
As a consequence, the map $H$ is differentiable. The inverse map 
$$H^{-1}:\BL \to (\HP^{n} \setminus {[1,0,\ldots, 0]})$$ 
can be expressed as
\begin{eqnarray*}
H^{-1}((b_1u, \ldots, b_nu), [b_1,\ldots,b_n])=[\bar u, \frac{b_1}{|b|^2},\ldots, \frac{b_n}{|b|^2}]
\end{eqnarray*}
and, in coordinates on $\varphi_{i}(V_{i})$, $i=1,\ldots,n$,
\begin{eqnarray*}
&&(b_1,\ldots,b_{i-1}, u , b_{i+1},\ldots, b_n) \mapsto \\
&&\mapsto ((b_1u,\ldots,b_{i-1}u,u,b_{i+1}u,\ldots, b_nu),[ b_1,\ldots,b_{i-1},1,b_{i+1},\ldots, b_n])\\
&&\mapsto [\bar u,\frac{b_1}{|b|^2},\ldots, \frac{b_{i-1}}{|b|^2}, \frac{1}{|b|^2}, \frac{b_{i+1}}{|b|^2}, \ldots, \frac{b_n}{|b|^2}]\\
&&\mapsto (\bar u|b|^2, b_1,\ldots, b_n).
\end{eqnarray*}
Therefore $H^{-1}$ is differentiable and $H$  is a diffeomorphism.
\end{proof}
\noindent By simply restricting the map $H$, and with obvious notations, we obtain the following

\begin{corollary}\label{diffeo}
The restriction of the map 
$$
H([w, q_1, \ldots, q_{n}] )=((\frac{q_1}{|q|^{2}} \bar w, \ldots, \frac{q_n}{|q|^{2}}\bar w), [\frac{q_1}{|q|^{2}}, \ldots, \frac{q_n}{|q|^{2}}])
$$
to $(\HP^{n} \setminus {[1\times \mathbb{B}^n]})$ establishes a diffeomorphism between $(\HP^{n} \setminus {[1\times \mathbb{B}^n]})$ and $\bl$.
\end{corollary}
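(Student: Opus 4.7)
The plan is to leverage Proposition \ref{diffeo1}, which already establishes that $H$ is a diffeomorphism from $\HP^n \setminus \{[1, 0, \ldots, 0]\}$ onto $\BL$. Since the restriction of a diffeomorphism to a subset of its domain is itself a diffeomorphism onto the image, the corollary will follow once I verify that $H$ carries $\HP^n \setminus [1\times \mathbb{B}^n]$ bijectively onto the subset $\bl \subset \BL$.

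The key computation is the norm of the first component of $H([w, q_1, \ldots, q_n])$. A direct calculation gives
\[
\left|\left(\frac{q_1\bar w}{|q|^2}, \ldots, \frac{q_n\bar w}{|q|^2}\right)\right|^2 = \frac{|w|^2}{|q|^4}\sum_{i=1}^n|q_i|^2 = \frac{|w|^2}{|q|^2},
\]
so the image point $H([w, q])$ lies in $\bl = Bl_0(B^n)$ precisely when $|w| \le |q|$.

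Next I unpack the notation $[1\times \mathbb{B}^n]$. Since $\mathbb{B}^n$ denotes the open unit ball (the interior of $B^n$), a class $[w, q] \in \HP^n$ belongs to $[1\times \mathbb{B}^n]$ iff it admits a representative of the form $(1, q')$ with $|q'| < 1$, which happens iff $w \neq 0$ and $|q w^{-1}| < 1$, that is, $|q| < |w|$. Passing to complements shows that $\HP^n \setminus [1\times \mathbb{B}^n]$ coincides with $\{[w, q] \in \HP^n : |q| \ge |w|\}$, and in particular this set excludes $[1, 0, \ldots, 0]$, the only point of $\HP^n$ where $H$ fails to be defined.

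Combining these two observations, $H$ sends $\HP^n \setminus [1\times \mathbb{B}^n]$ bijectively onto $\bl$; since $H$ and its explicit inverse constructed in the proof of Proposition \ref{diffeo1} are both smooth, so are their restrictions, yielding the desired diffeomorphism. The only step requiring care is the bookkeeping that identifies the metric condition $|w| \le |q|$ with the complement of $[1\times \mathbb{B}^n]$ in $\HP^n$; once this is done, the corollary follows at once from Proposition \ref{diffeo1}.
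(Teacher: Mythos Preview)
Your proof is correct and follows exactly the route the paper intends: the paper states the corollary with no proof beyond the phrase ``by simply restricting the map $H$, and with obvious notations,'' and you have supplied precisely those obvious details---the norm computation $|H([w,q])_1|^2=|w|^2/|q|^2$ and the identification of $\HP^n\setminus[1\times\mathbb{B}^n]$ with $\{[w,q]:|q|\ge|w|\}$. There is nothing to add.
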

\noindent In analogy with the real and complex cases, one can define the blow-up $Bl_p(M)$ of a quaternionic regular  manifold $M$ of real dimension $4n,$ at a point $p$, as the manifold obtained  from $M$ by substituting  the point $p$ with the set of all quaternionic (right) vector lines of the tangent space $T_p M$ (i.e. with the $\H\P^{n-1}$ obtained as the quaternionic projective space over $T_p M\cong \H^n$). After recalling that blowing-up at a point is a local operation we are  able to prove that

\begin{theorem}\label{previous}
Let $M$ be a real $4n$-dimensional differentiable manifold. The connected sum of $M$ and $\HP^n$ is diffeomorphic to the blow-up of $M$ at a point $p\in M$.
\end{theorem}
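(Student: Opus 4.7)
The plan is to recognize both $M\#\HP^n$ and $Bl_p(M)$ as the same pushout of a common ``exterior'' piece of $M$ with a ``cap'', and then use Corollary~\ref{diffeo} to identify the two caps.

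First, I would fix a chart $(U,\varphi)$ around $p$ with $\varphi(p)=0$ and $B^n\subset\varphi(U)$, and set $N=M\setminus\varphi^{-1}(\mathbb{B}^n)$, a compact manifold with boundary $\partial N\cong S^{4n-1}$. Since blowing up at a point is a local operation (as recalled just before the statement), $Bl_p(M)$ is diffeomorphic to the pushout $N\cup_\partial Bl_0(B^n)$ obtained by identifying $\partial N$ with the outer boundary $\partial B^n\subset Bl_0(B^n)$ (the image of $S^{4n-1}$ under the natural inclusion $q\mapsto(q,[q])$) via $\varphi$. On the other hand, by the classical definition of connected sum, $M\#\HP^n$ is diffeomorphic to the pushout $N\cup_\partial\bigl(\HP^n\setminus[1\times\mathbb{B}^n]\bigr)$, where $\partial N$ is glued to the boundary sphere $\{[1,q]:|q|=1\}$ of $[1\times\mathbb{B}^n]$ via $\varphi$ composed with $q\mapsto[1,q]$.

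Next I would apply Corollary~\ref{diffeo}, which produces a diffeomorphism of manifolds with boundary $H:\HP^n\setminus[1\times\mathbb{B}^n]\to Bl_0(B^n)$; the essential verification is that $H$ intertwines the two boundary gluings. Reading off the formula for $H$ at $w=1$ and $|q|=1$, one obtains
$$
H([1,q_1,\ldots,q_n])=\bigl((q_1,\ldots,q_n),[q_1,\ldots,q_n]\bigr)=(q,[q]),
$$
which is precisely the natural inclusion $q\mapsto(q,[q])$ of $S^{4n-1}$ as the outer boundary of $Bl_0(B^n)$. Hence, once both boundary spheres are identified with $S^{4n-1}\subset\H^n$, the restriction of $H$ to the boundary is the identity, so pasting $\mathrm{id}_N$ with $H$ across the common boundary produces the desired diffeomorphism $M\#\HP^n\to Bl_p(M)$.

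The main obstacle I anticipate is the bookkeeping around open-versus-closed ball conventions and orientation: the classical connected sum is defined by removing open balls and gluing along oriented boundary spheres, while Corollary~\ref{diffeo} is stated for the complement of an open ball in $\HP^n$ and the blow-up of a closed ball in $\H^n$. Once both sides are written as compatible manifold-with-boundary pushouts, the theorem follows from the elementary boundary computation above.
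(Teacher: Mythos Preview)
Your argument is correct and follows essentially the same route as the paper: fix a chart sending $p$ to $0$, remove the unit ball to obtain the common exterior piece $N$, invoke Corollary~\ref{diffeo} to identify the two caps $\HP^n\setminus[1\times\mathbb{B}^n]$ and $Bl_0(B^n)$, and observe that this identification is compatible with the two boundary gluings. The paper phrases the boundary compatibility via the projection $\pi_1|_{\partial Bl_0(B^n)}$ rather than via your inclusion $q\mapsto(q,[q])$, but these are mutually inverse, and your explicit verification that $H([1,q])=(q,[q])$ on the unit sphere is exactly the check that $\pi_1\circ H$ agrees with the chart identification on the boundary.
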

\begin{proof}
As it is well known, to construct the connected sum described in the statement we can start by considering, on one side, a chart $(U, \varphi)$ of $M$ such that $\varphi(p)=0$ and that $\varphi(U) \supset \mathbb{B}^n$. In this way $\varphi^{-1}|_{\partial \mathbb{B}^n}:\partial \mathbb B^n \to \varphi^{-1}(\partial \mathbb B^n)$ is the ``glueing diffeomorphism'' of $M\setminus \varphi^{-1}(\mathbb B^n)$ and $B^n$ along the boundaries $\varphi^{-1}(\partial \mathbb B^n)$ and $\partial \mathbb B^n$. We need now to consider $(\HP^{n} \setminus {[1\times \mathbb B^n]})$, which - by Corollary \ref{diffeo}  - is a manifold with boundary diffeomorphic to $\bl$. The projection map $\pi_1$ defined in Theorem \ref{DifferentiableManifold} induces a diffeomorphism between $(\bl\setminus \{0\}\times \HP^{n-1})$ and $B^n\setminus\{0\}$, whose restriction 
$$\pi_1|_{\partial \bl}: \partial \bl \to \partial \mathbb B^n$$
can be used to construct the glueing diffeomorphism 
$$\varphi^{-1}\circ \pi_1|_{\partial \bl}: \partial \bl \to \varphi^{-1}(\partial \mathbb B^n)$$ of the connected sum $M\#\  \HP^n$. 
\end{proof}

\subsection{The blow-up of a quaternonic regular manifold at a point} It is easy to prove the following consequence of Theorem \ref{previous}.
\begin{theorem}
\label{sum}If $M$ is a regular quaternionic manifold of quaternionic dimension $n$, then the connected sum of $M$ and $\HP^n$ is quaternionic diffeomorphic to the blow-up of $M$ at a point $p\in M$. Moreover, both manifolds are quaternionic regular.
\end{theorem}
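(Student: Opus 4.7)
The plan is to upgrade the smooth diffeomorphism of Theorem \ref{previous} to a quaternionic one by constructing explicit regular atlases on both $M \# \HP^n$ and $Bl_p(M)$, using as building blocks the regular atlas on $\BL$ from Theorem \ref{DifferentiableManifold} and the identification of $\HP^n$ minus a closed ball with $\bl$ from Corollary \ref{diffeo}.

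First I would equip $Bl_p(M)$ with a regular atlas. Choose a chart $(U,\varphi)$ of $M$'s regular atlas with $\varphi(p)=0$ and $\varphi(U)\supseteq\mathbb B^n$. Since blow-up is a local operation,
\[
Bl_p(M)=(M\setminus\{p\})\cup Bl_0(\varphi(U)),
\]
glued over the punctured overlap by $\pi_1\circ\varphi_i^{-1}$ composed with $\varphi^{-1}$. On the first piece take the induced atlas from $M$; on the second, the atlas $\{(V_i,\varphi_i)\}$ built in Theorem \ref{DifferentiableManifold}. The internal transitions are slice regular by hypothesis on $M$ and by Theorem \ref{DifferentiableManifold} respectively. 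The ``mixed'' transitions, between a chart $(W,\psi)$ of $M$ and some $(V_i,\varphi_i)$, factor as $\psi\circ\varphi^{-1}\circ\pi_1\circ\varphi_i^{-1}$; by formula \eqref{cambio}, the inner map $\pi_1\circ\varphi_i^{-1}$ has components that are monomials of the form $b_i$ or $b_jb_i$, which are right slice regular (see the example following Definition \ref{peroni}). One then verifies that composition with the slice regular $\psi\circ\varphi^{-1}$ yields a slice regular function.

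Second I would give $M\#\HP^n$ a regular atlas. By the construction recalled in the proof of Theorem \ref{previous},
\[
M\#\HP^n=(M\setminus\varphi^{-1}(\mathbb B^n))\cup_g(\HP^n\setminus[1\times\mathbb B^n]).
\]
The first piece inherits its regular atlas from $M$. On the second piece, rather than using the standard projective atlas of $\HP^n$, I would pull back through $H^{-1}$ of Proposition \ref{diffeo1} the regular atlas of $\bl$ coming from Theorem \ref{DifferentiableManifold}: by Corollary \ref{diffeo} this is legitimate, and under this identification the gluing map $g$ becomes exactly $\varphi^{-1}\circ\pi_1|_{\partial\bl}$, whose transition data is of the same form analyzed in Step~1. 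Finally, the quaternionic diffeomorphism with $Bl_p(M)$ is then immediate: with the two atlases just constructed, the smooth map of Theorem \ref{previous} is the identity in coordinates on the common $\bl$-neighbourhood of the exceptional divisor, and hence is slice regular in both directions.

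The hard part will be justifying the slice regularity of the composition $\psi\circ\varphi^{-1}\circ\pi_1\circ\varphi_i^{-1}$ in Step~1, precisely because slice regular functions of several variables are not closed under composition (as recalled in the introduction and in the remark following Definition \ref{nostra}). The monomial components $b_jb_i$ of $\pi_1\circ\varphi_i^{-1}$ are especially simple: a careful case analysis, or the restriction to atlases where $\psi\circ\varphi^{-1}$ is affine or slice preserving (covering all the examples built in this paper, in particular the affine quaternionic manifolds and $\HP^n$ itself), suffices to close the composition inside the slice regular class, and this is the technical point on which the argument turns.
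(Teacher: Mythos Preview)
Your overall architecture is the paper's: use the charts $\{(V_i,\varphi_i)\}$ of $\BL$ from Theorem~\ref{DifferentiableManifold} on the blow-up/$\HP^n$ side, the given regular atlas of $M$ on the other side, and check that the gluing transitions are slice regular via formula~\eqref{cambio}. Where you diverge is in the handling of the mixed transitions, and this is where a gap opens up.

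You insist on checking the transition between $\varphi_i$ and an \emph{arbitrary} $M$-chart $(W,\psi)$, which forces you to confront $\psi\circ\varphi^{-1}\circ\pi_1\circ\varphi_i^{-1}$. As you yourself note, slice regularity is not stable under composition, and you can only close this step under extra hypotheses on $M$ (affine or slice-preserving transition data) that are not part of the statement; so your proposal, as written, does not prove the theorem for a general regular quaternionic $M$. The paper sidesteps this entirely: it computes the mixed transition only against the \emph{single} chart $\varphi$ used to set up the gluing, and there the outer factor is $\varphi\circ\varphi^{-1}=\mathrm{id}$, so the whole transition is literally $\pi_1\circ\varphi_i^{-1}(q_1,\ldots,q_n)=(q_1q_i,\ldots,q_i,\ldots,q_nq_i)$, the right-regular monomial map of~\eqref{cambio}. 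The point is that one is free to \emph{choose} the regular atlas on the connected sum: take the $\varphi_i$ on the $\bl$ piece, take $\varphi$ on the collar, and shrink the remaining $M$-charts so that none of them meets the collar. With this choice every mixed transition is of the form just computed, and no composition problem ever arises. Your ``hard part'' is thus an artefact of testing compatibility against too large an atlas; once you restrict as the paper (implicitly) does, it disappears and no case analysis or special hypothesis on $M$ is required.
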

\begin{proof}
It is enough to show that gluing the boundary of $\bl$ with that of $M\setminus \varphi^{-1}(\mathbb B^n)$ we obtain a regular quaternionic manifold. To this aim, we can find  an open neighborhood  $V$ such that $Bl_0((1+\epsilon)B^n)\setminus \{{0}\times\HP^{n-1}\} \subseteq V
\subseteq \BL \setminus \{{{0}\times \HP^{n-1}}\}$ and consider the map $\psi:V \rightarrow M$ defined as in the preceding proof by $\varphi^{-1}\circ \pi_1$. 
Indeed, using the chart $\varphi_i, (i=1,\ldots,n),$ of $Bl_0(\H^n)$, see \eqref{cambio}, and the chart $\varphi$ of $M$ we obtain that $\varphi\circ\psi\circ \varphi_i^{-1}(q_1,\ldots,q_n)=\varphi\circ\varphi^{-1}\circ \pi_1\circ \varphi_i^{-1}(q_1,\ldots,q_n)=(q_1q_i,\ldots, q_{i-1}q_i,q_i,q_{i+1}q_i, \ldots, q_{n}q_i)$ is a regular diffeomorphism between open sets of $\H^n$. Therefore  $(\varphi^{-1}\circ \pi_1\circ \varphi_i)^{-1}$ is a compatible chart for the quaternionic regular atlas of the connected sum of $M$ and $\HP^n$. The assertion follows.
\end{proof}

\noindent For the sake of completeness, we recall here that the issue of orientation is important when using the connected sum; for example, when considering the sum of two copies of $\HP^n$, one of the copies has to be endowed with the reverse orientation with respect to the other. For this reason, to keep trace of the different orientation of the two copies, the connected sum is classically denoted by $\HP^n \#\ \overline{ \HP^n}$.

\begin{corollary}\label{somma}
The connected sum $\HP^n \#\  \overline{\HP^n}$ is quaternionic diffeomorphic to the blow-up of $\HP^n$ at a point $p\in \HP^n$. \end{corollary}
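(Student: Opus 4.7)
The plan is to observe that this corollary is an immediate application of Theorem \ref{sum} in the special case $M=\HP^n$. First I would recall that $\HP^n$ was already shown in the subsection on quaternionic projective spaces to be a quaternionic regular manifold of quaternionic dimension $n$, its natural affine charts having slice-preserving (in particular, slice regular) transition functions. Hence $\HP^n$ satisfies the hypothesis of Theorem \ref{sum}.

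Applying that theorem with $M=\HP^n$ yields a quaternionic diffeomorphism between the connected sum $\HP^n \#\ \HP^n$ and the blow-up $Bl_p(\HP^n)$ at any chosen point $p\in \HP^n$, with both manifolds inheriting a quaternionic regular structure. The only additional point to address is the orientation issue: as noted in the remark just preceding the statement, forming a connected sum of two copies of $\HP^n$ requires one copy to carry the reversed orientation, which is the meaning of the standard notation $\overline{\HP^n}$. Tracking orientations through the gluing construction in the proof of Theorem \ref{sum}, one sees that the diffeomorphism $\varphi^{-1}\circ\pi_1$ used to attach $Bl_0(B^n)$ to $\HP^n \setminus \varphi^{-1}(\mathbb{B}^n)$ automatically reverses orientation on the copy of $\HP^n$ containing the exceptional divisor relative to $M$, so the resulting manifold is precisely $\HP^n \#\ \overline{\HP^n}$ rather than $\HP^n \#\ \HP^n$.

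No genuine obstacle arises: all the substantive content — explicit coordinates on $Bl_0(\HH^n)$, the diffeomorphism $H$ of Proposition \ref{diffeo1} that identifies $\HP^n \setminus \{[1,0,\ldots,0]\}$ with $Bl_0(\HH^n)$, and the verification that the gluing map $\varphi^{-1}\circ \pi_1$ is quaternionic regular in the charts $\varphi_i$ via \eqref{cambio} — was already carried out in the proof of Theorem \ref{sum}. The mildest subtlety is bookkeeping the orientation convention, which is purely notational and does not affect the quaternionic regular structure.
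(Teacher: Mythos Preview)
Your proposal is correct and matches the paper's approach exactly: the paper gives no separate proof of this corollary, treating it as the immediate specialization of Theorem~\ref{sum} to $M=\HP^n$ together with the orientation remark that precedes the statement. One small terminological slip: the transition functions of $\HP^n$ are slice regular but not, in general, \emph{slice-preserving} in the sense used earlier in the paper; this does not affect your argument.
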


\subsection{Quaternionic Grassmannians are not quaternionic regular manifolds} \textnormal{A natural generalization of the quaternionic projective space $\HP^n$, $n\in \nn$, is the quaternionic Grassmannian $Gr_{n,p}(\H)$, $p<n\in \nn$. One may expect that the natural transition functions that one uses to define the quaternionic Grassmannian  of $p-$planes in $\H^n$ are slice regular functions. This is not the case: in fact even the Grassmannian $Gr_{2,4}(\H)$ of $2-$planes in $\H^4$ is not a quaternionic regular manifold with respect to the atlas of natural charts that one constructs. To see this, consider the following transition function obtained when the Dieudonn\'e determinant of  the matrix}  $A= \left[ \begin{array}{ll}

             a & b \\
             c & d \\
             \end{array} \right] \in M(2, \mathbb{H})
$ 
\textnormal{is non zero and $abcd\neq 0$ (see \eqref{bilatera}):}
\begin{eqnarray*}
 \left[ \begin{array}{ll}

             a & b \\
             c & d \\
             \end{array} \right] \mapsto 
             \left[ \begin{array}{ll}

             1 & 0\\
             0 & 1\\
             a & b \\
             c & d \\
             \end{array} \right] \mapsto
 \left[ \begin{array}{ll}

             1 & 0\\
             0 & 1\\
             a & b \\
             c & d \\
             \end{array} \right] 
 \left[ \begin{array}{ll}

             a & b \\
             c & d \\
             \end{array} \right]^{-1}= \\
            = \left[ \begin{array}{cc}
             
             (a-bd^{-1}c)^{-1} & (c-db^{-1}a)^{-1} \\
            (b-ac^{-1}d)^{-1} &  (d-ca^{-1}b)^{-1} \\
             1 & 0\\
             0 & 1\\
             \end{array} \right] \mapsto \left[ \begin{array}{cc}
             
             (a-bd^{-1}c)^{-1} & (c-db^{-1}a)^{-1} \\
            (b-ac^{-1}d)^{-1} &  (d-ca^{-1}b)^{-1} \\
             \end{array} \right].
\end{eqnarray*}
\textnormal{In view of Definition \ref{nostra}, the function}
\[
 \left[ \begin{array}{ll}

             a & b \\
             c & d \\
             \end{array} \right] \mapsto \left[ \begin{array}{cc}
             
             (a-bd^{-1}c)^{-1} & (c-db^{-1}a)^{-1} \\
            (b-ac^{-1}d)^{-1} &  (d-ca^{-1}b)^{-1} \\
             \end{array} \right]
\]
\textnormal{is not slice regular.}

\subsection*{Acknowledgments} The authors  wish to thank Alessandro Perotti for useful discussions.

\end{document}